\def\co{\colon\thinspace}
\author{Yinghua Ai}
\address{School of Mathematical Science, Peking University, Beijing 100871, P.R.China, and Department of Mathematics, Columbia University, MC 4406 2990 Broadway, New York,  NY 10027}
\email{ofey@math.columbia.edu}
\author{Thomas Peters}
\address{Department of Mathematics, Columbia University, MC 4406\\ 2990 Broadway, New York,  NY 10027}
\email{tpeters@math.columbia.edu}
\title{\textbf{the Twisted Floer homology of torus bundles}}
\begin{document}
\newtheorem{theorem}{Theorem}[section]
\newtheorem{lemma}[theorem]{Lemma}
\newtheorem{definition}[theorem]{Definition}
\newtheorem{proposition}[theorem]{Proposition}
\newtheorem*{thm*}{Theorem}
\newtheorem{rem}[theorem]{Remark}

\begin{abstract}
Given a torus bundle $Y$ over  the circle and a cohomology class  $[\omega]\in H^2(Y;\mathbb{Z})$ which evaluates non-trivially on the fiber, we compute the  Heegaard Floer homology of $Y$ with twisted coefficients in the universal  Novikov ring. 
\end{abstract}

\maketitle

\section{Introduction}  
\label{section:intro}
Heegaard Floer homology was introduced by Ozsv\'ath and Szab\'o in \cite{OSzAnn1,OSzAnn2}, it provides powerful invariants for closed oriented $3$--manifolds. There is also a  filtered version, called knot Floer homology, for null-homologous knots \cite{OSzKnot,Ra}.
It turns out that Heegaard Floer homology provides much geometric information.  For instance, it can detect fibrations.  Work of  Ghiggini \cite{Gh} and Ni \cite{Ni1}   shows that knot Floer homology detects fiberedness in knots.  

Turning to closed fibered $3$--manifolds, note that a $3$--manifold which admits a fibration $\pi \co Y\to S^1$ has a canonical Spin$^c$ structure, $\ell$, obtained as the tangents to the fibers of $\pi$. In \cite{OSzSympl}, Ozsv\'ath and Szab\'o prove 

\begin{thm*}
\label{theorem:fiber} Let $Y$ be a closed $3$--manifold which fibers over the circle, with fiber $F$ of  genus $g>1$, and let ${\bf \mathfrak{t}}$ be a Spin$^c$ structure over $Y$ with 
\[
	\langle c_1({\bf \mathfrak{t}}),[F]\rangle=2-2g.
\]  Then for ${\bf \mathfrak{t}}\neq \ell$, we have that
\[
	HF^+(Y,{\bf \mathfrak{t}})=0;
\]
while
\[
	HF^+(Y,\ell)\cong \mathbb{Z}
\] \end{thm*}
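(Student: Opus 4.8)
\medskip
\noindent\emph{Proof strategy.}
The plan is to reinterpret $Y$ as a surgery on a fibered knot and then extract the extremal group from knot Floer homology, where the computation collapses to the sutured Floer homology of a product region. So the first step is to cut $Y$ open along the fiber: isotope the monodromy $\phi\co F\to F$ so that it fixes a disk $D\subset F$, set $\widehat F=F\smallsetminus\operatorname{int}D$ and $\widehat\phi=\phi|_{\widehat F}$, and let $B\subset Y'$ be the binding of the abstract open book $(\widehat F,\widehat\phi)$. Then $B$ is a fibered knot whose fiber is the page $\widehat F$; since $B$ bounds a page it is null-homologous, and since fibers are Thurston-norm minimizing its Seifert genus is $g$. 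By construction $Y$ is the $0$-surgery $Y'_0(B)$, the capped-off page is the fiber $F\subset Y$, and the canonical $\operatorname{Spin}^c$ structure $\ell$ is the extension of the outermost relative $\operatorname{Spin}^c$ structure of $B$.

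Next I would feed this into Ozsv\'ath--Szab\'o's identification of the Heegaard Floer homology of $0$-surgery with knot Floer homology. The hypothesis $g>1$ enters precisely here: the $\operatorname{Spin}^c$ structures $\mathfrak t$ on $Y=Y'_0(B)$ with $\langle c_1(\mathfrak t),[F]\rangle=2-2g$ are exactly those whose restrictions to $Y'\smallsetminus B$ lie in the outermost Alexander grading $g$, and for $g\ge 2$ this grading is ``isolated'', meaning that $HF^+(Y,\mathfrak t)$ in such a structure records only the associated-graded knot Floer group there, with no contamination from the interior gradings or from the $U$-tower of $HF^\infty$. Thus $HF^+(Y,\mathfrak t)\cong\widehat{HFK}(Y',B,\mathfrak s',g)$, where $\mathfrak s'$ is the $\operatorname{Spin}^c$ structure on $Y'$ underlying $\mathfrak t$; here $\ell$ corresponds to the canonical $\mathfrak s'$, and distinct extremal $\mathfrak t$ correspond to distinct $\mathfrak s'$.

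It remains to compute the outermost knot Floer group of a fibered knot. Decomposing the complement $Y'\smallsetminus N(B)$ along the fiber $\widehat F$ turns it into the product sutured manifold $(\widehat F\times I,\,\partial\widehat F\times\{1/2\})$, whose sutured Floer homology is $\mathbb Z$, carried by the single $\operatorname{Spin}^c$ structure extending the canonical one; by the surface decomposition theorem this gives $\widehat{HFK}(Y',B,g)\cong\mathbb Z$ for the canonical $\mathfrak s'$ and $0$ for every other $\mathfrak s'$. (Equivalently, one works with a Heegaard diagram subordinate to the open book, in which the outermost Alexander grading contains exactly one intersection point.) Combined with the previous step, $HF^+(Y,\ell)\cong\mathbb Z$ while $HF^+(Y,\mathfrak t)=0$ for the remaining $\mathfrak t$ with $\langle c_1(\mathfrak t),[F]\rangle=2-2g$.

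The step I expect to be hardest is the surgery-formula reduction: one has to match $\operatorname{Spin}^c$ structures on $Y'_0(B)$ with those on $Y'$, check that the isolation of the outermost Alexander grading really does collapse the relevant piece of $CFK^\infty(Y',B)$ onto its associated graded (controlling the gluing maps $d_{\pm}$ of the mapping cone is exactly where $g>1$ is indispensable), and carry out the orientation bookkeeping showing that the surviving $\mathbb Z$ lands in $\ell$ and not in $\overline\ell$ or a translate. The remaining ingredients---that a fibered knot has a unique outermost generator, and that the product sutured manifold has sutured Floer homology $\mathbb Z$---are comparatively routine. (A more $4$-dimensional route, filling $Y$ by a Lefschetz fibration and playing off non-vanishing of the contact invariant against the adjunction inequality, runs into the separate issue that a general monodromy need not be a product of right-handed Dehn twists.)
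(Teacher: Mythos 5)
This theorem is not proved in the paper under review --- it is quoted in the introduction from Ozsv\'ath--Szab\'o's paper \cite{OSzSympl} (their Theorem~5.2). The structure of their argument can, however, be read off from the present paper's proof of Theorem~\ref{theorem:main}, which explicitly ``mimics the proof of Theorem~5.2 in \cite{OSzSympl}'': use the integer surgery exact triangle applied to a curve $K\subset F$ with the fiber framing, observe that the $0$--surgery term is again a surface bundle but whose fiber class is now represented by a genus $g-1$ surface, so the adjunction inequality kills $HF^+$ in the relevant extremal $\mathrm{Spin}^c$ structures; the triangle then gives isomorphisms between $HF^+(M_\phi,\text{ext})$ and $HF^+(M_{\phi\tau_c},\text{ext})$ for positive twists $\tau_c$, and one propagates these isomorphisms to a base case whose extremal Floer homology is identified with $\mathbb{Z}$ using symplectic/Lefschetz-fibration input. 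Your proposal takes a genuinely different route: present $Y$ as $0$--surgery on the binding $B$ of an open book in $Y'$, invoke the $0$--surgery formula (for $g\ge 2$, $HF^+$ in the extremal $\mathrm{Spin}^c$ structures collapses onto the top Alexander grading of $\widehat{HFK}(Y',B)$), and then evaluate that top group as $\mathbb{Z}$ in the canonical $\mathrm{Spin}^c$ structure via Juh\'asz's surface decomposition theorem (or equivalently an open-book--adapted diagram). This is a correct and non-circular alternative: Juh\'asz's sutured theory and the $0$--surgery collapse are independent of the fibering theorem. What each buys: the original argument uses only contemporaneous tools (exact triangle, adjunction, Lefschetz pencils) and generalizes directly to the twisted/genus-$1$ setting used later in this paper; yours removes the need for any $4$--dimensional/symplectic input and any base-case computation, at the cost of relying on the later sutured-Floer machinery, and it makes the single-$\mathrm{Spin}^c$ localization essentially automatic.

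Two small comments. First, the parenthetical concern you raise about the $4$--dimensional route --- that a general monodromy need not be a product of right-handed Dehn twists --- is not actually an obstruction: because the third term of the triangle vanishes by adjunction, the triangle gives an \emph{isomorphism}, so one may both add and remove positive twists, and these generate the mapping class group. (This is also why the present paper can get away with only the monoid statement in the torus case: there every twist can be chosen positive, which is convenient but not strictly necessary.) Second, the step you flag as hardest --- matching $\mathrm{Spin}^c$ structures on $Y'_0(B)$ with relative $\mathrm{Spin}^c$ structures in the top Alexander grading and verifying that the surviving $\mathbb{Z}$ sits in $\ell$ --- is genuinely the place where the details need to be written out; it follows from the standard relationship between $\mathrm{Spin}^c(Y'_0(B))$ and $\underline{\mathrm{Spin}^c}(Y',B)$, together with the fact that the generator produced by the product sutured manifold sits in the relative $\mathrm{Spin}^c$ structure determined by the tangent plane field to the open book, but this bookkeeping must be carried through rather than asserted.
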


In fact, Yi Ni proved a converse to the above statement in \cite{Ni2} which states:

\begin{thm*}
\label{Theorem:Ni} Suppose $Y$ is a closed irreducible $3$--manifold, $F\subset Y$ is a closed connected surface of genus $g>1$.  Let $HF^+(Y,[F],1-g)$ denote the group
\[
	\bigoplus_{ {\bf \mathfrak{s}}\in Spin^c(Y), \langle c_1({\bf \mathfrak{s}}),[F]\rangle=2-2g}HF^+(Y,{\bf \mathfrak{s}}).
\] If $HF^+(Y,[F],1-g)\cong \mathbb{Z}$, then $Y$ fibers over the circle with $F$ as a fiber.
\end{thm*}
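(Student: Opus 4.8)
The plan is to translate the statement into a rigidity property of sutured Floer homology and then apply the theory of sutured manifold hierarchies. The first step is to extract geometric information from the hypothesis. Since $g>1$, a separating $F$ (equivalently $[F]=0$) would make the index set defining $HF^{+}(Y,[F],1-g)$ empty and the group zero; hence $[F]\neq 0$, $F$ is nonseparating, and the complement $M:=Y\setminus N(F)$ is connected with $\partial M=F_{+}\sqcup F_{-}$ consisting of two parallel copies of $F$. Next, by the adjunction inequality for $HF^{+}$, any $\mathrm{Spin}^{c}$ structure $\mathfrak{s}$ with $HF^{+}(Y,\mathfrak{s})\neq 0$ satisfies $|\langle c_{1}(\mathfrak{s}),[F']\rangle|\le 2g(F')-2$ for every embedded positive--genus representative $F'$ of $[F]$; since our hypothesis provides such an $\mathfrak{s}$ with $|\langle c_{1}(\mathfrak{s}),[F]\rangle|=2g-2$, we get $\|[F]\|_{T}=2g-2$, so $F$ is genus--minimizing in its homology class. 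Using that $Y$ is irreducible (whence $F$ is incompressible and $M$ is irreducible), it follows that $M$, equipped with the sutured structure $R_{+}(\gamma)=F_{+}$, $R_{-}(\gamma)=F_{-}$, $\gamma=\emptyset$, is a taut (balanced) sutured manifold $(M,\gamma)$.

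The Floer-theoretic input I would invoke is the identification (following Ozsv\'ath--Szab\'o's genus bounds together with Juh\'asz's sutured theory) of the Heegaard Floer homology of $Y$ in these adjunction--sharp $\mathrm{Spin}^{c}$ structures with the sutured Floer homology of the cut--open manifold: there the $U$--action on $HF^{+}$ vanishes and the resulting invariant is computed by $SFH(M,\gamma)$. Under this identification the hypothesis reads $SFH(M,\gamma)\cong\mathbb{Z}$. The theorem will therefore follow from the rigidity statement that \emph{a taut balanced sutured manifold $(M,\gamma)$ with $SFH(M,\gamma)\cong\mathbb{Z}$ is a product sutured manifold}, because if $(M,\gamma)\cong(F\times I,\,\partial F\times I)$ then regluing $F_{+}$ to $F_{-}$ exhibits $Y$ as the mapping torus of a self--homeomorphism $h\co F\to F$, so $Y$ fibers over $S^{1}$ with fiber isotopic to $F$.

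To prove the rigidity statement I would run a sutured manifold hierarchy. By Gabai's theorem the taut $(M,\gamma)$ admits a decomposition sequence
\[
	(M,\gamma)\ \rightsquigarrow\ (M_{1},\gamma_{1})\ \rightsquigarrow\ \cdots\ \rightsquigarrow\ (M_{k},\gamma_{k})
\]
along well--groomed surfaces, with each $(M_{i},\gamma_{i})$ taut and $(M_{k},\gamma_{k})$ a product. By Juh\'asz's surface decomposition theorem each step embeds $SFH(M_{i+1},\gamma_{i+1})$ into $SFH(M_{i},\gamma_{i})$ as the direct summand spanned by the ``outer'' relative $\mathrm{Spin}^{c}$ structures, and a product sutured manifold has $SFH\cong\mathbb{Z}$, so every rank in the hierarchy equals $1$. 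The crux is to show that a taut sutured manifold which is \emph{not} a product possesses a non-boundary-parallel decomposing surface $S$ such that both $(M,\gamma)\rightsquigarrow^{S}(M',\gamma')$ and $(M,\gamma)\rightsquigarrow^{-S}(M'',\gamma'')$ are taut; then $SFH(M,\gamma)$ receives nonzero contributions from a genuinely ``outer'' and a genuinely ``inner'' $\mathrm{Spin}^{c}$ structure, hence has rank at least $2$, contradicting $SFH(M,\gamma)\cong\mathbb{Z}$. Thus $(M,\gamma)$ is already a product, and the theorem follows.

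I expect this last point --- producing, for an arbitrary nonproduct taut sutured manifold, a decomposing surface that is genuinely detected by $SFH$ (equivalently, showing that its sutured Floer homology polytope is positive-dimensional) --- to be the main obstacle; this is the delicate core of the Ghiggini--Ni analysis \cite{Gh,Ni1} in the knot case, and here it must be carried out for complements of closed surfaces. A secondary technical matter is making the identification $HF^{+}(Y,[F],1-g)\cong SFH(M,\gamma)$ precise --- in particular verifying the vanishing of the $U$--action in the extremal $\mathrm{Spin}^{c}$ structures and bookkeeping the $\mathrm{Spin}^{c}$ structures when $b_{1}(Y)>1$.
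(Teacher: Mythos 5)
This statement is quoted in the paper as a result of Ni \cite{Ni2}; the paper itself offers no proof of it, so there is nothing internal to compare your sketch against.  Judged on its own terms, your outline correctly identifies the modern framework --- cut $Y$ along $F$, run a sutured--manifold decomposition, and appeal to a ``rank one implies product'' rigidity statement --- in the spirit of Ghiggini, Ni, and Juh\'asz, and your preliminary steps (showing $[F]\neq 0$, invoking adjunction to conclude that $F$ is Thurston-norm-minimizing, and deducing tautness of the cut-open manifold) are all sound.

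However, the items you set aside as technical are exactly where the substance of Ni's argument lives, and as stated they leave genuine gaps.  First, the sutured manifold $(M,\gamma)$ you propose, with $\gamma=\emptyset$ and $R_\pm(\gamma)=F_\pm$ closed, is \emph{not} balanced in Juh\'asz's sense: the definition requires every component of $\partial M$ to meet the sutures, so $SFH(M,\gamma)$ is simply not defined for your $(M,\gamma)$.  One must first modify the picture (drill a curve, pass to a knot complement, or similar) to land in the balanced setting, and tracking how $HF^+(Y,[F],1-g)$ relates to the resulting sutured or knot-Floer invariant is one of the main propositions Ni has to prove; it is not a ``secondary technical matter.''  Second, the rigidity statement you invoke --- that a taut balanced sutured manifold with $SFH\cong\mathbb{Z}$ is a product --- is not available in the generality you need: the known versions carry extra hypotheses such as $H_2(M;\mathbb{Z})=0$ together with horizontal primeness, and when $b_1(Y)>1$ the complement $M=Y\setminus N(F)$ can well have nontrivial $H_2$.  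Either reducing to a setting where such a detection theorem applies, or proving the required detection statement directly for this class of sutured manifolds, is precisely the step you flag as ``the main obstacle''; your sketch does not supply it, and without it the argument does not close.
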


For $g=1$, the methods in the above proofs fail.  In his paper \cite{Ni2}, Ni wrote that Ozsv\'ath and Szab\'o suggested to him to use Heegaard Floer homology with twisted coefficients in some Novikov ring in order to extend to the genus $1$ case.  Investigating the $g=1$ case of the aforementioned theorems formed the motivation for this paper.  Much is known, however, about the Floer homology of torus bundles.  For instance, John Baldwin has already computed the untwisted Heegaard Floer homologies of torus bundles with $b_1(Y)=1$ in \cite{Baldwin}.  

In this paper, we use  Heegaard Floer homology with  twisted coefficients in the universal Novikov ring, $\Lambda$, of all formal power series of the form 
\[ \lambda = \sum_{r \in\mathbb{R}}a_r t^r,\,\#\{r\in\mathbb{R}|a_r \neq0,r\leq c\}<\infty \text{ for all } c\in\mathbb{R}\ \] 
where the coefficients  $a_r\in\mathbb{R}$, see \cite[Section 11.1]{MD}. Given a cohomology class $[\omega]\in H^2(Y;\mathbb{Z})$, $\Lambda$ can be given a $\mathbb{Z}[H^1(Y;\mathbb{Z})]$--module structure, and this gives rise to a twisted Heegaard Floer homology $\underline{HF}^+(Y;\Lambda_\omega)$. This version was first defined in \cite[section 10]{OSzAnn1} and can also be derived from the definition of general twisted Heegaard Floer homology in \cite{OSzAnn2}.  We will describe this  group explicitly in Section \ref{subsection:omegatwist}. It is worth noting that  Heegaard Floer homology with twisted  coefficients in a certain Novikov ring has already been studied  extensively in \cite{JM}.  The main theorem we prove in this paper is the following.

\begin{theorem}
\label{theorem:main}
Suppose $Y$ is a closed oriented $3$--manifold which fibers over the circle with torus fiber $F$,  $[\omega] \in H^2(Y;\mathbb{Z})$ is a cohomology class such that $\omega(F)\neq0$.  Then we have an isomorphism of $\Lambda$--modules
\[
\underline{HF}^+(Y;\Lambda_{\omega}) \cong \Lambda.
\]
\end{theorem}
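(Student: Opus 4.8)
The plan rests on a couple of preliminary remarks. First, the universal Novikov ring $\Lambda$ is in fact a field: the finiteness hypothesis forces every nonzero $\lambda=\sum a_r t^r$ to have a well--defined minimal exponent $r_0$ with $a_{r_0}\ne 0$, so that $a_{r_0}^{-1}t^{-r_0}\lambda=1+(\text{higher order})$ is a unit; in particular $1-t^c$ is a unit for every real $c\ne 0$. Thus the assertion of the theorem is equivalent to $\dim_\Lambda\underline{HF}^+(Y;\Lambda_\omega)=1$. Second, the $\mathbb{Z}[H^1(Y;\mathbb{Z})]$--module structure that defines $\Lambda_\omega$ has a class $h\in H^1(Y;\mathbb{Z})$ act by multiplication by $t^{\langle h\cup[\omega],[Y]\rangle}$, and since $H^1(Y;\mathbb{Z})$ is torsion free this pairing annihilates the torsion part of $[\omega]$; hence $\underline{HF}^+(Y;\Lambda_\omega)$ depends only on the image of $[\omega]$ in $H^2(Y;\mathbb{Z})/\mathrm{Tors}$, and I may as well take $[\omega]$ to be a nonzero integer multiple of the Poincar\'e dual of the fiber $F$.

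The key input is the vanishing of twisted Floer homology in the presence of an essential two--sphere: $\underline{HF}^+(S^1\times S^2;\Lambda_{\omega'})=0$ whenever $\langle[\omega'],[S^2]\rangle\ne 0$, because the twisting identifies the differential in the genus--one picture with multiplication by $1-t^{\langle[\omega'],[S^2]\rangle}$, a unit of $\Lambda$; by the connected sum formula it follows that $\underline{HF}^+(M;\Lambda_{\omega'})=0$ for every closed oriented $M$ possessing an $S^1\times S^2$ connected summand on which $[\omega']$ restricts non-trivially. (By the same invertibility phenomenon applied to Ozsv\'ath--Szab\'o's computation of the twisted $\underline{HF}^\infty$, one also gets $\underline{HF}^\infty(Y;\Lambda_\omega)=0$, so that $\underline{HF}^+(Y;\Lambda_\omega)$ is a finite--dimensional $\Lambda$--vector space; this is convenient but not essential below.)

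Now I induct on the monodromy using surgery exact triangles. Write $Y=Y_\phi$ for the mapping torus of $\phi\in SL(2,\mathbb{Z})=\mathrm{MCG}(T^2)$. Since $\mathrm{MCG}(T^2)$ is generated by the Dehn twists $T_a,T_b$ along the two standard curves of the fiber, $Y_\phi$ is obtained from $Y_{\mathrm{id}}=T^3$ by a finite sequence of $(\pm 1)$--framed Dehn surgeries along standard curves contained in fibers, each such surgery composing the monodromy with a single Dehn twist. Consider one such surgery, on an essential curve $\gamma$ in a fiber $F$ of $Y_\psi$. The twisted surgery exact triangle relates $\underline{HF}^+(Y_\psi;\Lambda_\omega)$, $\underline{HF}^+(Y_{\psi T_\gamma^{\pm 1}};\Lambda_\omega)$, and $\underline{HF}^+(W;\Lambda_\omega)$, where $W$ is the surgery on $\gamma$ taken with the fiber framing. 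Compressing $F$ along $\gamma$ yields a two--sphere $S\subset W$ which equals $[F]$ in the homology of the surgery cobordism, so, choosing $\gamma$ disjoint from a cycle Poincar\'e dual to $[\omega]$ (possible, since $\gamma$ lies in a fiber), we have $\langle[\omega],[S]\rangle=\langle[\omega],[F]\rangle\ne 0$; hence $S$ is non-separating and $W$ has an $S^1\times S^2$ summand on which $[\omega]$ is non-trivial, so $\underline{HF}^+(W;\Lambda_\omega)=0$. With one term of the triangle vanishing, the other two become isomorphic: $\underline{HF}^+(Y_\psi;\Lambda_\omega)\cong\underline{HF}^+(Y_{\psi T_\gamma^{\pm 1}};\Lambda_\omega)$. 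At every stage the relevant class still evaluates non-trivially on the fiber, which survives the surgery, so by the first paragraph the hypotheses persist; iterating along a word for $\phi$ yields $\underline{HF}^+(Y_\phi;\Lambda_\omega)\cong\underline{HF}^+(T^3;\Lambda_\omega)$.

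It remains only to compute the base case $\underline{HF}^+(T^3;\Lambda_\omega)\cong\Lambda$: one can read this off the standard genus--three Heegaard diagram for $T^3$, keeping track of the $[\omega]$--periods of the holomorphic disks that enter the differential, or deduce it by writing $T^3$ as $0$--surgery on the Borromean rings and feeding the known filtered complex of that link into the twisted surgery (K\"unneth) formula; in either case the non-trivial twisting collapses the untwisted answer to a single copy of $\Lambda$. I expect the real labour to lie precisely here and in the homological bookkeeping attached to the exact triangles --- arranging the twisted triangle with coherent $[\omega]$--data on all three manifolds and on the connecting cobordisms, and checking that the fiber--framed term $W$ really does acquire an $S^1\times S^2$ summand carrying a non-trivial restriction of $[\omega]$ --- rather than in the triangles themselves, which impose nothing but an isomorphism once the third term is known to vanish.
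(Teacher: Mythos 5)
Your argument follows the same skeleton as the paper's: $\Lambda$ is a field so $1-t^d$ is a unit, $\underline{HF}^+$ vanishes on manifolds containing a nonseparating sphere on which $\omega$ is nontrivial, and the $\omega$-twisted surgery triangle kills the $0$-framed term and hence identifies $\underline{HF}^+$ across each Dehn-twist cobordism; this is precisely the chain of ideas in the paper (Proposition~\ref{proposition:spheres} and Theorem~\ref{theorem:exact}). The one place you genuinely diverge is the terminus of the induction, and this is where your argument has a real gap. You stop at $T^3$ and then wave at the base-case computation (``read this off the standard genus-three Heegaard diagram,'' or ``$0$--surgery on the Borromean rings''). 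The paper instead terminates the sequence at $S^3_0(T)$, the $0$-surgery on the right-handed trefoil, precisely because $\underline{HF}^+(S^3_0(T);\mathbb{Z}[t,t^{-1}])$ is already computed in \cite{OSzAbsGr}; plugging that known answer into the universal coefficients spectral sequence over $\mathbb{Z}[t,t^{-1}]\to\Lambda$ kills the $\mathbb{Z}$-summands (since $1-t^d$ is a unit) and leaves one copy of $\Lambda$. Both manifolds are reachable from any torus bundle because the positive Dehn-twist monoid is all of $SL(2,\mathbb{Z})$, so the choice is one of convenience; but without an actual computation for $T^3$ your proof is not complete. You should either carry out the $T^3$ calculation (and it is not obviously shorter than the trefoil route --- $T^3$ has larger Floer homology, and one must keep track of which of the several fiber directions $\omega$ pairs with) or simply aim for $S^3_0(T)$ as the paper does.

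A smaller point: the sentence ``I may as well take $[\omega]$ to be a nonzero integer multiple of the Poincar\'e dual of the fiber'' does not follow from killing the torsion part of $[\omega]$. For a torus bundle $b_1(Y)$ can be $1$, $2$, or $3$, and when it exceeds $1$ the free part of $H^2(Y;\mathbb{Z})$ is not spanned by $PD[F]$, so the twisted coefficient system genuinely depends on more than $\omega(F)$. Fortunately your induction never actually uses this reduction --- you only need to choose, at each stage, a fiber curve $K_i$ disjoint from a representative $\eta_i$ of $PD[\omega_i]$ and verify that $\omega_{i+1}(F_{i+1})=\omega_i(F_i)\ne 0$ persists, which is exactly what the paper does --- so you should simply delete the reduction rather than rely on it.
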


\begin{rem}
 In the setting of Monopole
Floer homology, a corresponding version of this theorem was proved
in \cite[Theorem~42.7.1]{KM}.
In an upcoming paper \cite{AiN}, it is proved that the converse of the above theorem also holds, i.e the twisted Heegaard Floer homology determines  whether an irreducible $3$--manifold is a torus bundle over the circle. 
\end{rem}

This paper is organized as follows.
We provide a review of Heegaard Floer homology with twisted coefficients in section \ref{section:review}, including the most pertinent example, $S^1\times S^2$.  In section \ref{section:exact} we prove a relevant exact triangle for $\omega$--twisted Heegaard Floer homology and prove Theorem \ref{theorem:main}.

\vspace{5pt}\noindent\textbf{Acknowledgements.}
The authors would like to thank Peter Ozsv\'ath for continued guidance and support.
The first author would also like to thank Yi Ni  for suggesting the problem
 and providing some key ideas.

This work was carried out while the first author was an exchange
graduate student at Columbia University, supported by the China Scholarship Council.
 He is grateful to the
Columbia math department for its hospitality.

\section{Review of Twisted Coefficients}
\label{section:review}
We recall the construction of Heegaard Floer homology with twisted coefficients, see \cite{OSzAnn2} and \cite{OSzGenus} for more details.  To a closed oriented $3$--manifold $Y$ we associate a pointed Heegaard diagram $(\Sigma,\mathbf{\alpha}, {\bf \beta}, z)$, where $\Sigma$ is an an oriented surface of genus $g\geq1$ and ${\bf \alpha} = \{\alpha_1,...,\alpha_g\}$ and ${\bf \beta} = \{\beta_1,...\beta_g\}$ are sets of attaching circles (assumed to intersect transversely) for the two handlebodies in the Heegaard decomposition.  These give a pair of transversely intersecting $g$-dimensional tori $\mathbb{T}_\alpha = \alpha_1\times\cdots\times\alpha_g$ and $\mathbb{T}_\beta=\beta_1\times\cdots\times\beta_g$ in the symmetric product $Sym^g(\Sigma)$.  Recall that the basepoint $z$ gives a map $\mathfrak{s}_z:\mathbb{T}_\alpha\cap \mathbb{T}_\beta\to Spin^c(Y)$.  Given a Spin$^c$ structure ${\bf \mathfrak{s}}$ on Y, let $\mathfrak{S}\subset \mathbb{T}_\alpha\cap \mathbb{T}_\beta$ be the set of intersection points ${\bf x}\in \mathbb{T}_\alpha\cap \mathbb{T}_\beta$ such that $\mathfrak{s}_z({\bf x})={\bf \mathfrak{s}}$.

Given intersection points ${\bf x}$ and ${\bf y}$ in $\mathbb{T}_\alpha\cap \mathbb{T}_\beta$, let $\pi_2({\bf x},{\bf y})$ denote the set of homotopy classes of Whitney disks from ${\bf x}$ to ${\bf y}$.  There is always a natural map from $\pi_2({\bf x},{\bf x})$ to $H^1(Y;\mathbb{Z})$ obtained as follows: each $\phi\in \pi_2({\bf x},{\bf x})$ naturally gives rise to an associated two-chain in $\Sigma$ whose boundary is a collection of circles among the ${\bf \alpha}$ and ${\bf \beta}$ curves.  We then close off this two-chain by gluing copies of the attaching disks for the handlebodies in the Heegaard diagram for $Y$.  The Poincar\'e dual of this two-cycle is the associated element of $H^1(Y;\mathbb{Z})$. 

Given a Spin$^c$ structure ${\bf \mathfrak{s}}$ on $Y$ and an ${\bf \mathfrak{s}}$-admissible Heegaard diagram $(\Sigma,{\bf \alpha}, {\bf \beta},z)$ for $Y$, an \emph{additive assignment} is a collection of maps
\[	
	A=\{A_{{\bf x},{\bf y}}:\pi_2({\bf x},{\bf y})\to H^1(Y;\mathbb{Z})\}_{{\bf x,y}\in \mathfrak{S}}
\]
so that:
\begin{itemize}
\item when ${\bf x}={\bf y},$ $A_{{\bf x},{\bf x}}$ is the canonical map from $\pi_2({\bf x},{\bf x})$ onto $H^1(Y;\mathbb{Z})$ defined above.
\item A is compatible with splicing in the sense that if ${\bf x,y,u}\in \mathfrak{S}$ then for each $\phi_1\in\pi_2({\bf x},{\bf y})$ and $\phi_2\in\pi_2({\bf y},{\bf u})$, we have that $A(\phi_1\ast\phi_2) = A(\phi_1)+A(\phi_2)$.
\item $A_{\mathbf{x},\mathbf{y}}(S*\phi)=A_{\mathbf{x},\mathbf{y}}(\phi)$ for $S\in\pi_2(Sym^g(\Sigma_g))$.
\end{itemize}  Additive assignments may be constructed with the help of a \emph{complete system of paths} as described in \cite{OSzAnn2}.

We write elements in the group-ring $\mathbb{Z}[H^1(Y;\mathbb{Z})]$ as finite formal sums\\ $\sum _{g\in H^1(Y;\mathbb{Z})} n_g\cdot e^g$ for $n_g\in\mathbb{Z}$.  The \emph{universally twisted Heegaard Floer complex}, $\underline{CF}^{\infty}(Y,{\bf \mathfrak{s}};\mathbb{Z}[H^1(Y;\mathbb{Z})],A)$, is the free $\mathbb{Z}[H^1(Y;\mathbb{Z})]$--module on generators $[{\bf x},i]$ for ${\bf x}\in \mathfrak{S}$ and $i\in\mathbb{Z}$.  The differential, $\underline{\partial}^{\infty}$, is given by
\[
	\underline{\partial}^{\infty}[x,i] = \sum_{ {\bf y}\in\mathfrak{S} }\sum_{ \genfrac{}{}{0pt}{}{ \phi\in\pi_2({\bf x},{\bf y})}{\mu(\phi)=1} } \#\widehat{\mathcal{M}}(\phi)\cdot e^{A(\phi)}\otimes[{\bf y},i-n_z(\phi)].
\]  Here $\mu(\phi)$ denotes the Maslov index of $\phi$, the formal dimension of the space $\mathcal{M}(\phi)$ of holomorphic representatives, $n_z(\phi)$ denotes the intersection number of $\phi$ with the subvariety $\{z\}\times Sym^{g-1}(\Sigma) \subset Sym^g(\Sigma)$, and $\widehat{\mathcal{M}}(\phi)$ denotes the quotient of $\mathcal{M}(\phi)$ under the natural action of $\mathbb{R}$.  Just as in the untwisted setting, this complex admits a $\mathbb{Z}[U]$--action via $U:[\mathbf{x},i]\mapsto[\mathbf{x},i-1]$.  This gives rise to variants $\underline{CF}^+,\underline{CF}^-$, and $\underline{\widehat{CF}}$, denoted collectively as $\underline{CF}^\circ$.
The homology groups of this complexes are the universally twisted Heegaard Floer homology groups $\underline{HF}^\circ$.

 More generally, given any $\mathbb{Z}[H^1(Y;\mathbb{Z})]$--module $M$, we may form Floer homology groups with coefficients in $M$ by taking $\underline{HF}^\circ(Y,{\bf \mathfrak{s}};M)$ as the homology of the complex
\[	
	\underline{CF}^\circ(Y,{\bf \mathfrak{s}};M,A):=\underline{CF}^\circ(Y,{\bf \mathfrak{s}};\mathbb{Z}[H^1(Y;\mathbb{Z})],A)\otimes_{\mathbb{Z}[H^1(Y;\mathbb{Z})]}M.
\]  For instance, by taking $M=\mathbb{Z}$, thought of as being a trivial $\mathbb{Z}[H^1(Y;\mathbb{Z})]$--module, one recovers the ordinary untwisted Heegaard Floer homology, $CF^\circ(Y,\mathfrak{s})$.

In \cite{OSzAnn1}, it is proved that  the homologies defined above are independent of the choice of additive assignment $A$ and are topological invariants of the pair  $(Y,\mathfrak{s})$.  As in the untwisted setting, these groups are related by long exact sequences 

\begin{displaymath}
\xymatrix{	\cdots \ar[r] & \underline{\widehat{HF}}(Y,\mathfrak{s};M) \ar[r] & \underline{HF}^+(Y,\mathfrak{s};M) \ar[r]^U & \underline{HF}^+(Y,\mathfrak{s};M) \ar[r] & \cdots }
\end{displaymath}
and 
\begin{displaymath}
\xymatrix{	\cdots \ar[r] & \underline{HF}^-(Y,\mathfrak{s};M) \ar[r]^\iota & \underline{HF}^\infty(Y,\mathfrak{s};M) \ar[r]^\pi & \underline{HF}^+(Y,\mathfrak{s};M) \ar[r] & \cdots }
\end{displaymath}

Of course, the chain complex $\underline{CF}^\circ(Y,\mathfrak{s};M)$ is obtained from the chain complex in the universally twisted case, $\underline{CF}^\circ(Y,\mathfrak{s};\mathbb{Z}[H^1(Y;\mathbb{Z})])$, by a change of coefficients and hence the corresponding homology groups are related by the universal coefficients spectral sequence (\cite{CE}).

\subsection{$\omega$--twisted Heegaard Floer homology}
\label{subsection:omegatwist}

In this section we briefly recall the notion of $\omega$--twisted Heegaard Floer homology, following 
\cite{OSzAnn1,OSzGenus}.

The universal Novikov ring is defined to be
  $$
  \Lambda = \left\{\sum_{r \in \mathbb{R}}a_r t^r\bigg|a_r\in\mathbb{R},
  \;\#\{a_r|a_r\ne0, r \leq c\}<\infty\quad \text{\rm for all $c\in\mathbb R$}
  \right\},
  $$
endowed with  the following  multiplication law which makes it into a field:
$$
(\sum_{r \in \mathbb{R}}a_rt^r ) \cdot (\sum_{r \in \mathbb{R}}b_rt^r)=\sum_{r \in \mathbb{R}}(\sum_{s \in \mathbb{R}}a_sb_{r-s})t^r.
$$
Furthermore, by fixing a cohomology class $[\omega]\in H^2(Y;\mathbb{R})$ we can give $\Lambda$ a\\ $\mathbb{Z}[H^1(Y;\mathbb{Z})]$--module structure
via the ring homomorphism induced by:
 \[
 \begin{array}{ccl}
H^1(Y;\mathbb{Z})&\to &\mathbb{R}\\
  \gamma  &\mapsto&
\int_{Y} \gamma \wedge \omega
\end{array}.
  \]
When we are interested in its $\mathbb{Z}[H^1(Y;\mathbb{Z})]$--module structure, we denote it 
as $\Lambda_{\omega}$.  This $\mathbb{Z}[H^1(Y;\mathbb{Z})]$--module structure gives rise to a  twisted Heegaard Floer
homology $\underline{HF}^+(Y;\Lambda_{\omega})$, which we refer to as
 $\omega$--\emph{twisted Heegaard Floer homology}. More concretely,
it can be defined as follows. Choose a weakly admissible pointed  Heegaard diagram
$(\Sigma,\mbox{\boldmath${\alpha}$}, \mbox{\boldmath${\beta}$},z)$
for $Y$ and fix a 2--cocycle representative $\omega\in[\omega]$. Every Whitney disk $\phi$ in $\mathrm{Sym}^g(\Sigma)$
(for $\mathbb{T}_\alpha$ and $\mathbb{T}_\beta$) gives rise to a
two-chain $[\phi]$ in $Y$ by coning off partial $\alpha$ and $\beta$ circles with gradient trajectories in the $\alpha$ and $\beta$ handlebodies. The evaluation of $\omega$ on $[\phi]$
depends only on the homotopy class of $\phi$ and is denoted
$\int_{[\phi]}\omega$ (or sometimes $\omega([\phi]))$. The $\omega$--twisted chain complex
$\underline{CF}^+(Y; \Lambda_{\omega})$ is the free
$\Lambda$--module generated by $[x,i]$ with $x \in
\mathbb{T}_\alpha \cap \mathbb{T}_\beta$ and integers $i \geq 0$,
endowed with the following differential:
\[
    \underline{\partial}^+[{\bf x},i] = \sum_{{\bf y}\in \mathbb{T}_\alpha\cap \mathbb{T}_\beta}\sum_{ \{\phi\in\pi_2({\bf x}, {\bf y})|\mu(\phi)=1\}} \#\widehat{\mathcal{M}}(\phi)[{\bf y},i-n_z(\phi)]\cdot t^{\int_{[\phi]}\omega}.
\]
Its homology is the $\omega$-twisted Heegaard Floer homology
$\underline{HF}^+(Y;\Lambda_{\omega})$.
 Notice that this group is both a module for $\mathbb{Z}[H^1(Y;\mathbb{Z})]$
  and a module for $\Lambda$.  Notice that although the differential depends on the choice of 2--cocycle representative $\omega\in[\omega]$, the isomorphism class of the chain complex only depends on the cohomology class.  The advantage of using this viewpoint is that we avoid altogether the notion of an ``additive assignment".  It is easy to see, however, that the complex defined above is isomorphic to one obtained by chosing an additive assignment and then tensoring with the $\mathbb{Z}[H^1(Y;\mathbb{Z})]$-module $\Lambda_{\omega}$.
 
Suppose $W:Y_1 \to Y_2$ is a $4$--dimensional cobordism from $Y_1$ to $Y_2$ given by a single $2$--handle addition and we have a cohomology class  $[\omega]\in H^2(W;\mathbb{R})$.  Then there is an associated 
 Heegaard triple $(\Sigma,{\bf \alpha}, {\bf \beta}, {\bf \gamma}, z)$ and 
 $4$--manifold $X_{\alpha\beta\gamma}$ representing $W$ minus a 
 one complex.  Similar to before, a Whitney triangle $\psi\in\pi_2({\bf x},{\bf y},{\bf w})$ determines a two-chain in $X_{\alpha\beta\gamma}$ on which we may evaluate a representative, $\omega\in[\omega]$.  As before, this evaluation depends only on the
  homotopy class of $\psi$ and is denoted by $\int_{[\psi]}\omega$.  This gives rise to  a $\Lambda$--equivariant map
\[	
	\underline{F}^+_{W;\omega} \co \underline{HF}^+(Y_1;\Lambda_{\omega|_{Y_1}})\to \underline{HF}^+(Y_2;\Lambda_{\omega|_{Y_2}})
\]defined on the chain level by
\[
\underline{f}^+_{W;\omega}[{\bf x},i] = \sum_{{\bf y}\in \mathbb{T}_\alpha \cap \mathbb{T}_\gamma} \sum_{ \genfrac{}{}{0pt}{}{\psi\in\pi_2({\bf x},\Theta,{\bf y})}{\mu(\psi)=0} } \#\mathcal{M}(\psi) [{\bf y},i-n_z(\psi)]\cdot t^{\int_{[\psi]}\omega}.
\] where $\Theta \in \mathbb{T}_\beta \cap \mathbb{T}_\gamma$ represents a top 
dimensional generator for the Floer homology 
$HF^{\leq 0}(Y_{\beta \gamma}) \cong \wedge^{\ast} H^1(Y_{\beta \gamma})\otimes\mathbb{Z}[U]$
 of the $3$--manifold determined by the Heegaard diagram $(\Sigma,{\bf \beta},{\bf \gamma},z)$, which is
a connected sum $\#^{g-1}(S^1 \times S^2)$.  These maps may be decomposed as
a sum of maps, $$\underline{F}^+_{W;\omega} = \sum_{\mathfrak{s}\in Spin^c(W)}\underline{F}^+_{W,\mathfrak{s};\omega},$$ according to Spin$^c$ equivlence classes of triangles, 
just as in the untwisted setting.  This can be extended to arbitrary (smooth, connected) cobordisms from $Y_1$ to $Y_2$ as in \cite{OSzFour}.  These maps also satisfy a \emph{composition law}: if $W_1$ is a cobordism from $Y_1$ to $Y_2$ and $W_2$ is a cobordism from $Y_2$ to $Y_3$, and we equip $W_1$ and $W_2$ with Spin$^c$ structures $\mathfrak{s}_1$ and $\mathfrak{s}_2$ respectively (whose restrictions agree over $Y_2$), then putting $W=W_1\#_{Y_2}W_2$, for any $[\omega]\in H^2(W;\mathbb{R})$ we have 
\[ \underline{F}^+_{W_2,\mathfrak{s}_2;\omega|_{W_2}} \circ \underline{F}^+_{W_1,\mathfrak{s}_1;\omega|_{W_1}} = \sum_{ \{\mathfrak{s}\in Spin^c(W):\mathfrak{s}|_{W_i}=\mathfrak{s}_i\}} \pm \underline{F}^+_{W,\mathfrak{s};\omega}. \]

\subsection{Example: $S^1\times S^2$}
\label{subsection:s1xs2}

In this section we calculate twisted Heegaard Floer homologies of $S^1\times S^2$.  We start with the universally twisted version $\underline{\widehat{HF}}(S^1\times S^2; \mathbb{Z}[t,t^{-1}])$, where we have identified $\mathbb{Z}[H^1(S^1\times S^2;\mathbb{Z})] \cong \mathbb{Z}[t,t^{-1}]$, the ring of Laurent polynomials.  $S^1\times S^2$ has a standard genus one Heegaard decomposition $(\Sigma,\alpha,\beta)$ where $\alpha$ is a homotopically nontrivial embedded curve and $\beta$ is an isotopic translate.  For simplicity, we only compute $\underline{\widehat{HF}}$.  We make the diagram weakly admissible for the unique torsion Spin$^c$ structure  $\mathfrak{s}_0$ by introducing cancelling pairs of intersection points between $\alpha$ and $\beta$.  This gives a pair of intersection points $x^+$ and $x^-$.  We next need an additive assignment.  Notice there is an obvious periodic domain consisting of a pair of (nonhomotopic) disks $D_1$ and $D_2$ connecting $x^+$ and $x^-$.  When capped off, the periodic domain gives a sphere representing a generator of $H_2(S^1\times S^2;\mathbb{Z}) \cong \mathbb{Z}$.  Hence taking Poincar\'e dual we recover a generator of $H^1(S^1\times S^2;\mathbb{Z}) \cong \mathbb{Z}$.  Identifying $H^1(S^1\times S^2;\mathbb{Z}) \cong \mathbb{Z}$, an additive assignment must assign $1$ to this domain.  One way this can be done is by assigning $1$ to $D_1$ and $0$ to $D_2$.  Hence we see that the complex $\underline{\widehat{CF}}(S^1\times S^2;\mathbb{Z}[t,t^{-1}])$ is just 
\begin{displaymath}
\xymatrix{0 \ar[r] & \mathbb{Z}[t,t^{-1}] \ar[r]^{1-t} & \mathbb{Z}[t,t^{-1}] \ar[r] & 0}.
\end{displaymath}
Here, the first copy of $\mathbb{Z}[t,t^{-1}]$ corresponds to $x^+$ and the second corresponds to $x^-$.  This complex has homology $\mathbb{Z}$, with trivial $\mathbb{Z}[t,t^{-1}]$--action.  If we keep track of gradings, we get the universally twisted Floer homology
\[
\underline{\widehat{HF}}(S^1\times S^2;\mathbb{Z}[t,t^{-1}]) \cong \mathbb{Z}_{(-\frac{1}{2})}
\] supported only in the torsion Spin$^c$ structure $\mathfrak{s}_0$.

Now let's turn to an $\omega$-twisted example.  We can view $S^1\times S^2$ as $0$-surgery on the unknot in $S^3$.  Put $\mu$ a meridian for the unknot.  Then $\mu$ defines a curve, also denoted $\mu$, in $S^1\times S^2$.  Put $[\omega]=d\cdot PD[\mu]$ for an integer $d$.  The complex $\underline{\widehat{CF}}(S^1\times S^2;\Lambda_{\omega})$ is
\begin{displaymath}
\xymatrix{0 \ar[r] &\Lambda \ar[r]^{t^c(1-t^d)} & \Lambda \ar[r] &0}.
\end{displaymath} for some $c\in\mathbb{R}$.
Notice when $d \neq 0$,  $(1-t^d)$ is invertible in $\Lambda$. Hence
$$
\underline{\widehat{HF}}(S^1\times S^2;\Lambda_{\omega})=\begin{cases} 0 &\text{when $d \neq 0$}\\
\Lambda \oplus \Lambda&\text{when $d=0$}
\end{cases}.
$$

As a final example, we prove a proposition regarding embedded $2$--spheres in $3$--manifolds and $\omega$-twisted coefficients.  We begin with the following

\begin{lemma}
\label{lemma:connectsum}
Let $Y_1$ and $Y_2$ be a pair of closed oriented $3$--manifolds and fix cohomology classes $[\omega_i]\in H^2(Y_i;\mathbb{Z})$.  By the Mayer-Vietoris sequence we get a corresponding cohomology class $\omega_{1}\#\omega_{2}\in H^2(Y_1\#Y_2;\mathbb{Z}) \cong H^2(Y_1;\mathbb{Z})\oplus H^2(Y_2;\mathbb{Z})$.  Then we have an isomorphism of $\Lambda$--modules
\[
\underline{\widehat{HF}}(Y_1\#Y_2;\Lambda_{\omega_1\#\omega_2}) \cong \underline{\widehat{HF}}(Y_1;\Lambda_{\omega_{1}})\otimes_{\Lambda} \underline{\widehat{HF}}(Y_2;\Lambda_{\omega_{2}}).
\]

\end{lemma}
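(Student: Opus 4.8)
The plan is to realize $\underline{\widehat{CF}}(Y_1\#Y_2;\Lambda_{\omega_1\#\omega_2})$ as the tensor product over $\Lambda$ of the two complexes $\underline{\widehat{CF}}(Y_i;\Lambda_{\omega_i})$ by means of a connected sum Heegaard diagram, and then to apply the Künneth theorem over the field $\Lambda$.

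First I would fix weakly admissible pointed Heegaard diagrams $(\Sigma_i,\boldsymbol{\alpha}_i,\boldsymbol{\beta}_i,z_i)$ for $Y_i$ together with cocycle representatives $\omega_i\in[\omega_i]$, and build the connected sum diagram $(\Sigma_1\#\Sigma_2,\boldsymbol{\alpha}_1\cup\boldsymbol{\alpha}_2,\boldsymbol{\beta}_1\cup\boldsymbol{\beta}_2,z)$ by gluing small punctured neighbourhoods of $z_1$ and $z_2$ and putting the new basepoint $z$ in the connecting tube. This is a weakly admissible pointed Heegaard diagram for $Y_1\#Y_2$: its intersection points are the pairs $\mathbf{x}=(\mathbf{x}_1,\mathbf{x}_2)$, its periodic domains with $n_z=0$ are exactly sums of periodic domains with $n_{z_i}=0$ on the two sides, so weak admissibility is inherited, and the basepoint map recovers the decomposition $\mathrm{Spin}^c(Y_1\#Y_2)\cong\mathrm{Spin}^c(Y_1)\times\mathrm{Spin}^c(Y_2)$. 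Because the connect sum is made away from the supports of the cocycles, $\omega_1\#\omega_2$ has a representative that restricts to $\omega_i$ on $Y_i$ and vanishes near the tube; hence for a Whitney disk $\phi=\phi_1\#\phi_2$ in the connect sum diagram the associated two-chain is $[\phi_1]\cup[\phi_2]$ and $\int_{[\phi]}(\omega_1\#\omega_2)=\int_{[\phi_1]}\omega_1+\int_{[\phi_2]}\omega_2$, so the Novikov weights multiply: $t^{\int_{[\phi]}(\omega_1\#\omega_2)}=t^{\int_{[\phi_1]}\omega_1}\cdot t^{\int_{[\phi_2]}\omega_2}$.

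Next I would run the neck--stretching analysis from the proof of the untwisted connected sum formula: once the neck is long enough, the index one holomorphic disks with $n_z=0$ in the connect sum diagram are identified, via a standard gluing/degeneration argument, with pairs consisting of such a disk on one side and a generator on the other, so the differential on $\underline{\widehat{CF}}(Y_1\#Y_2;\Lambda_{\omega_1\#\omega_2})$ becomes the tensor product differential $\underline{\partial}_1\otimes\mathrm{id}\pm\mathrm{id}\otimes\underline{\partial}_2$, the multiplicativity of the Novikov weights observed above being exactly what makes the coefficients match. Thus $\underline{\widehat{CF}}(Y_1\#Y_2;\Lambda_{\omega_1\#\omega_2})$ is chain homotopy equivalent over $\Lambda$ to $\underline{\widehat{CF}}(Y_1;\Lambda_{\omega_1})\otimes_\Lambda\underline{\widehat{CF}}(Y_2;\Lambda_{\omega_2})$, and since $\Lambda$ is a field the Künneth theorem gives no $\mathrm{Tor}$ correction, yielding the claimed isomorphism of $\Lambda$--modules. (Equivalently, one can apply the connected sum formula for universally twisted coefficients over $\mathbb{Z}[H^1(Y_1\#Y_2)]\cong\mathbb{Z}[H^1(Y_1)]\otimes_\mathbb{Z}\mathbb{Z}[H^1(Y_2)]$ and then change coefficients by $\Lambda_{\omega_1\#\omega_2}$, using that this module is obtained from $\Lambda_{\omega_1}$ and $\Lambda_{\omega_2}$ by multiplication in $\Lambda$.) The one genuinely geometric point is the neck--stretching/gluing identification of the moduli spaces, but this is precisely the content of the untwisted connected sum theorem, so in practice one cites that argument and checks only the formal bookkeeping of Novikov exponents and the fact that $\Lambda$ is a field; note also that the whole argument respects the $\mathrm{Spin}^c$ splitting, so the isomorphism holds summand by summand and hence after summing over all $\mathrm{Spin}^c$ structures.
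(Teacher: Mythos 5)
Your proposal is correct and takes essentially the same route as the paper: the paper's one-line proof cites ``the methods of proof of Proposition 6.1 of Ozsv\'ath--Szab\'o'' (the connected-sum/neck-stretching argument) together with the fact that $\Lambda$ is a field, and your write-up simply unpacks that citation---connect-sum diagram, multiplicativity of Novikov weights under stretching the neck, and the K\"unneth theorem with no $\mathrm{Tor}$ correction since $\Lambda$ is a field. Your parenthetical alternative (first prove the universally twisted connect-sum formula and then change coefficients to $\Lambda_{\omega_1\#\omega_2}$) is likewise a faithful reading of the same citation.
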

\begin{proof}
This follows readily from the methods of proof of \cite[Proposition 6.1]{OSzAnn2} and the fact that $\Lambda$ is a field.
\end{proof}

This allows us to prove

\begin{proposition}
\label{proposition:spheres}
Let $S \subset Y^3$ be an embedded nonseparating $2$--sphere in a $3$--manifold $Y$.  Suppose $[\omega] \in H^2(Y;\mathbb{Z})$ is a cohomology class such that $\omega([S])\neq0$.  Then $\underline{HF}^+(Y;\Lambda_\omega)=0$.
\end{proposition}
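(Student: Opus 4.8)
The plan is to reduce to the connected-sum computation already in hand. Since $S$ is a nonseparating $2$--sphere in $Y$, a standard argument shows that $Y$ decomposes as a connected sum $Y \cong Y' \# (S^1\times S^2)$, where the $S^1\times S^2$ summand carries the sphere $S$ (as a fiber $\{pt\}\times S^2$) and $Y'$ is obtained by cutting $Y$ along $S$ and capping off the two resulting boundary spheres with balls. Under the induced splitting $H^2(Y;\mathbb{Z}) \cong H^2(Y';\mathbb{Z}) \oplus H^2(S^1\times S^2;\mathbb{Z})$ from the Mayer--Vietoris sequence, write $[\omega] = [\omega'] \# [\omega'']$. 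The hypothesis $\omega([S]) \neq 0$ translates precisely into the statement that $[\omega'']$ evaluates nontrivially on the $S^2$ factor, i.e. $[\omega'']$ is a nonzero multiple $d\cdot PD[\mu]$ of the generator dual to the $S^1$ factor, with $d\neq 0$ in the notation of Section~\ref{subsection:s1xs2}.

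First I would establish the connected-sum decomposition carefully: the sphere $S$ being nonseparating means its complement is connected, and a regular neighborhood of $S$ together with a connecting arc reconstructs an $S^1\times S^2$ summand; this is classical. Next I would identify the cohomology classes under the Mayer--Vietoris isomorphism and check the claim that $\omega([S])\neq 0$ forces the $d\neq 0$ case for the $S^1\times S^2$ summand. Then I would apply Lemma~\ref{lemma:connectsum} to obtain
\[
\underline{\widehat{HF}}(Y;\Lambda_{\omega}) \cong \underline{\widehat{HF}}(Y';\Lambda_{\omega'}) \otimes_{\Lambda} \underline{\widehat{HF}}(S^1\times S^2;\Lambda_{\omega''}),
\]
and invoke the computation from Section~\ref{subsection:s1xs2} that $\underline{\widehat{HF}}(S^1\times S^2;\Lambda_{\omega''}) = 0$ when $d\neq 0$. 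Since $\Lambda$ is a field, the tensor product vanishes, so $\underline{\widehat{HF}}(Y;\Lambda_\omega) = 0$.

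Finally I would pass from $\underline{\widehat{HF}}$ to $\underline{HF}^+$. Using the long exact sequence relating $\underline{\widehat{HF}}$ and $\underline{HF}^+$ displayed in Section~\ref{section:review}, vanishing of $\underline{\widehat{HF}}(Y;\Lambda_\omega)$ forces the map $U\co \underline{HF}^+(Y;\Lambda_\omega)\to\underline{HF}^+(Y;\Lambda_\omega)$ to be an isomorphism. But $\underline{HF}^+$ is, in each $\mathrm{Spin}^c$ structure, a finitely generated module over $\Lambda[U]$ whose elements are each annihilated by a sufficiently large power of $U$ (this is the analogue of the standard fact that $HF^+$ is $U$-torsion in the relevant sense, following from the structure of $\underline{CF}^\infty$ over a field); an automorphism of such a module which is also locally nilpotent must be the zero map, hence $\underline{HF}^+(Y;\Lambda_\omega) = 0$. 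The main obstacle I anticipate is this last bootstrapping step: one must argue cleanly that $U$ acts locally nilpotently (or use instead the $\underline{HF}^\infty \to \underline{HF}^+ \to \underline{HF}^{+}[1]$ structure together with the computation of $\underline{HF}^\infty$ with twisted coefficients in a field, where $\underline{HF}^\infty(Y;\Lambda_\omega)$ vanishes exactly when $[\omega]$ is non-torsion), so that "$U$ an isomorphism" genuinely forces vanishing rather than merely $U$-divisibility. Everything else — the topological splitting, the identification of cohomology classes, and the field property of $\Lambda$ — is routine.
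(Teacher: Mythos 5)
Your proof is correct and follows essentially the same route as the paper: split off an $S^1\times S^2$ summand via the nonseparating sphere, match cohomology classes through Mayer--Vietoris, apply Lemma~\ref{lemma:connectsum} together with the $S^1\times S^2$ computation, and then reduce from $\underline{\widehat{HF}}$ to $\underline{HF}^+$. The concern you raise about the last step is unfounded: $U$ is locally nilpotent on $\underline{CF}^+$ already at the chain level, since any cycle is a finite $\Lambda$-linear combination of generators $[\mathbf{x},i]$ with $i\geq 0$ and $U^{i+1}[\mathbf{x},i]=0$, so once the long exact sequence shows $U$ is injective, $\underline{HF}^+(Y;\Lambda_\omega)$ must vanish.
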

\begin{proof}

Just as in the untwisted theory, $\underline{HF}^+(Y;M)$ vanishes if and only if $\underline{\widehat{HF}}(Y;M)$ vanishes, so it suffices to show that $\underline{\widehat{HF}}(Y;\Lambda_\omega)=0$.  Notice that $Y$ contains an $S^1\times S^2$ summand in its prime decomposition.  Hence $Y\cong S^1\times S^2\#Y'$ for some $3$--manifold $Y'$.  Now $\omega\in H^2(Y;\mathbb{Z})\cong H^2(S^1\times S^2;\mathbb{Z})\oplus H^2(Y';\mathbb{Z})$ corresponds to classes $\omega_1\in H^2(S^1\times S^2;\mathbb{Z})$ and $\omega_2\in H^2(Y';\mathbb{Z})$ with $\omega_1([S])\neq0$.  We already know that $\underline{\widehat{HF}}(S^1\times S^2;\Lambda_{\omega_{1}})=0$ from the above calculation, so the proposition follows from Lemma                          \ref{lemma:connectsum}.
\end{proof}

\section{Exact sequence for $\omega$--twisted Floer homology}
\label{section:exact}
In this section we first prove a long exact sequence for the $\omega$--twisted Heegaard Floer homologies and then use it to prove Theorem \ref{theorem:main}. It is interesting to notice that there is a similar exact sequence in Monopole Floer homology with local coefficients, see \cite[Section 5]{KMOS}.  Our proof is a slight modification of the proof of the usual surgery exact sequence in Heegaard Floer homology.

Let $K \subset Y$ be framed knot in a $3$--manifold $Y$ with framing $\lambda$ and meridian $\mu$. Given an integer $r$, let $Y_r(K)$ denote  the $3$--manifold obtained from $Y$ by doing Dehn surgery along the knot $K$ with framing $\lambda+r\mu$. Let $\eta \subset Y-nbd(K)$  be a closed curve in the knot complement. Then for any integer $r$, $\eta \subset Y-nbd(K) \subset Y_r(K)$ is a closed curve in the surgered manifold $Y_r(K)$, we denote its Poincar\'e dual by $[\omega_r] \in H^2(Y_r(K);\mathbb{Z})$.  Also note that $\eta \times I$ represents a relative homology class in the corbordisms $W_0 \co Y \to Y_0(K)$,  $W_1 \co Y_0(K) \to Y_1(K)$ and $W_2 \co Y_1(K) \to Y$.  So as in Section \ref{subsection:omegatwist} it gives rise to  homomorphisms between $\omega$--twisted Floer homologies:
$$\underline{F}^+_{W_0;PD(\eta \times I)} \co \underline{HF}^+(Y;\Lambda_{\omega}) \to \underline{HF}^+(Y_0(K);\Lambda_{\omega_0}),$$
$$\underline{F}^+_{W_1;PD(\eta \times I)} \co \underline{HF}^+(Y_0(K);\Lambda_{\omega_0}) \to \underline{HF}^+(Y_1(K);\Lambda_{\omega_1}),$$
and 
$$\underline{F}^+_{W_2;PD(\eta \times I)} \co \underline{HF}^+(Y_1(K);\Lambda_{\omega_1}) \to \underline{HF}^+(Y;\Lambda_{\omega}),$$
where $\omega=PD(\eta) \in H^2(Y;\mathbb{Z})$.
We denote the corresponding maps on the chain level by $\underline{f}^+_{W_0;PD(\eta \times I)}$, $\underline{f}^+_{W_1;PD(\eta \times I)}$ and $\underline{f}^+_{W_2;PD(\eta \times I)}$ respectively.

\begin{theorem}
\label{theorem:exact} The maps above form an exact sequence of $\Lambda$-modules:

\begin{equation}\label{Seq:01}
\begin{xymatrix}{ \underline{HF}^+(Y;\Lambda_\omega) \ar[rr]
&&\underline{HF}^+(Y_0(K);\Lambda_{\omega_0}) \ar[dl]\\
&\underline{HF}^+(Y_1(K);\Lambda_{\omega_1})\ar[ul]&}
\end{xymatrix}
\end{equation}
 Furthermore, analogous exact sequences hold for ``hat" versions as well.
\end{theorem}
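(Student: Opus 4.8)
The plan is to adapt the proof of the ordinary surgery exact triangle from \cite{OSzAnn1} to the $\omega$--twisted setting, keeping careful track of the Novikov-ring weights $t^{\int_{[\psi]}\omega}$ attached to holomorphic triangles. Concretely, I would choose a Heegaard multi-diagram $(\Sigma, \boldsymbol{\alpha}, \boldsymbol{\beta}, \boldsymbol{\gamma}, \boldsymbol{\delta}, z)$ subordinate to the framed knot $K$, so that $(\Sigma,\boldsymbol{\alpha},\boldsymbol{\beta},z)$ describes $Y$, $(\Sigma,\boldsymbol{\alpha},\boldsymbol{\gamma},z)$ describes $Y_0(K)$, $(\Sigma,\boldsymbol{\alpha},\boldsymbol{\delta},z)$ describes $Y_1(K)$, and the three ``small translate'' diagrams $(\boldsymbol{\beta},\boldsymbol{\gamma})$, $(\boldsymbol{\gamma},\boldsymbol{\delta})$, $(\boldsymbol{\beta},\boldsymbol{\delta})$ all describe connected sums of $S^1\times S^2$'s. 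The curve $\eta$, pushed into the complement of all the attaching curves, determines a $2$--cocycle $\omega$ representing $PD(\eta)$ and, on each cobordism, the relative class $PD(\eta\times I)$; because $\eta$ is disjoint from $K$ it gives a consistent choice across the whole picture. One then forms the twisted maps $\underline{f}^+_{W_0}$, $\underline{f}^+_{W_1}$, $\underline{f}^+_{W_2}$ by counting triangles weighted by $t^{\int_{[\psi]}\omega}$ as in Section~\ref{subsection:omegatwist}.

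The skeleton of the argument is the standard one. First I would verify that the composite of consecutive maps is chain-homotopic to zero: the relevant rectangle counts (broken into two kinds of degenerations along the diagonal) assemble, exactly as in the untwisted case, into a chain homotopy, and the key point is that the Novikov weight of a spliced square equals the product of the weights of its two triangular pieces, which follows from additivity of $\int\omega$ under juxtaposition of domains — the same additivity property used for the additive assignment. Second, and this is the main obstacle, I need the ``stable'' exactness statement: the mapping cone of $\underline{f}^+_{W_0}$ is quasi-isomorphic to $\underline{CF}^+(Y_1(K);\Lambda_{\omega_1})$, via a count of higher polygons (squares and pentagons) in the quadruple and quintuple diagrams. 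In the untwisted proof this rests on two inputs: (a) an algebraic lemma that a ``zig-zag'' of maps whose successive composites are null-homotopic, together with one null-homotopy being itself null-homotopic via a map inducing an isomorphism, yields an exact triangle; and (b) the computation that the triangle map $\underline{CF}(S^1\times S^2) \to \underline{CF}(S^1\times S^2)$ arising from the relevant $\Theta$-generators is a quasi-isomorphism. Input (b) is where the twisting genuinely enters: I must check that, for the small-translate diagrams, $\eta$ restricts to a class on which the relevant surgery cobordism cocycle is cohomologically trivial (or at worst contributes an overall unit $t^c$), so that the twisted triangle count reduces to the untwisted one tensored with $\Lambda$, which is a quasi-isomorphism by the $S^1\times S^2$ calculation in Section~\ref{subsection:s1xs2}. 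Since $\Lambda$ is a field, once the relevant map is a nonzero-weighted isomorphism it stays an isomorphism after the change of coefficients.

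The purely homological-algebra lemma (input (a) above) is coefficient-ring agnostic, so I would simply invoke the version of \cite[Lemma~4.2]{OSzAnn1} (or its reformulation in \cite{OSzGenus}) with $\Lambda$ in place of $\mathbb{Z}$; nothing about that argument uses anything beyond working over a ground ring. The filtration/admissibility bookkeeping — achieving weak admissibility for all the diagrams simultaneously, and ensuring the polygon counts defining the higher homotopies are finite after weighting by $t^{\int\omega}$ — is routine but must be stated: finiteness of the Novikov sums follows from the energy filtration, exactly as in \cite[Section~11]{MD}, because domains of bounded area contribute bounded exponents. Finally, the ``hat'' version follows verbatim by setting $i=0$ and discarding the $U$-action, or alternatively by the long exact sequence relating $\underline{\widehat{HF}}$ and $\underline{HF}^+$ together with the five lemma. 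I expect the one genuinely new verification, and hence where I would spend the most care, to be pinning down that the twisting is trivial (up to a unit) on the $\beta\gamma$, $\gamma\delta$, $\beta\delta$ diagrams, i.e.\ that $\eta$ bounds appropriately in those auxiliary connected sums of $S^1\times S^2$'s; this is what makes the local coefficient system ``constant'' on the pieces where the model computation takes place.
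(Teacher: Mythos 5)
Your proposal is correct and follows essentially the same route as the paper: you set up the Heegaard quadruple/quintuple, count rectangles and pentagons weighted by $t^{\int\omega}$, invoke the standard exact-triangle lemma (the paper cites \cite{OSzDouble}, but the content is the same as the reformulation you mention), and correctly single out the crux — that the twisting must be trivial on the $\beta\gamma$, $\gamma\delta$, $\beta\delta$ pieces and their associated polygon $4$--manifolds so that those counts reduce to the known untwisted ones. The one detail the paper supplies that you merely flag as needing verification is the specific reason this holds: $\eta$ may be isotoped entirely into the $\alpha$--handlebody $U_\alpha$, which is disjoint from $Y_{\beta\gamma}$, $Y_{\gamma\delta}$, $X_{\beta\gamma\delta}$, etc., so $PD(\eta\times I)$ literally vanishes (not just up to a unit) on those pieces.
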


\begin{proof} Find a Heegaard diagram $(\Sigma, \{\alpha_1,\cdots, \alpha_g\},\{\beta_1,\cdots, \beta_g\},z)$ compatible with the knot $K$, i.e $K$ lies in the handlebody specified by the $\beta$ curves and $\beta_1$ is a meridian for $K$.  For each $i \geq 2$ let $\gamma_i,\delta_i$ be exact Hamiltonian isotopies of $\beta_i$.  Let $\gamma_1=\lambda$, $\delta_1=\lambda+\mu$  be the $0$-framed and $1$-framed longitude of the knot $K$, respectively.  We assume the Heegaard quadruple $(\Sigma, \alpha,\beta,\gamma,\delta,z)$ is weakly admissible in the sense of \cite{OSzAnn1}.  It is easy to see that $Y_{\alpha \beta}=Y$, $Y_{\alpha \gamma}=Y_0(K)$, $Y_{\alpha \delta}=Y_1(K)$, and $Y_{\beta \gamma} \cong Y_{\gamma \delta} \cong Y_{\beta \delta} \cong \#^{g-1}S^2 \times S^1$.

Following \cite{OSzDouble}, we define a map
 \[h_1\co \underline{CF}^+(Y;\Lambda_{\omega}) \to \underline{CF}^+(Y_1(K);\Lambda_{\omega_1})\] 
 by counting holomorphic rectangles:
 \[
h_1([\mathbf{x},i])=\sum_{\mathbf{w} \in \mathbb{T}_\alpha \cap \mathbb{T}_\delta}\sum_{ \genfrac{}{}{}{0pt}{\varphi \in \pi_2(\mathbf{x},\Theta_{\beta \gamma},\Theta_{ \gamma \delta},\mathbf{w})}\mu(\varphi)=-1}\# \mathcal{M}(\varphi)[\mathbf{w},i-n_z(\varphi)]t^{\int_{[\varphi]}PD(\eta \times I)}
\]
Similarly we define $h_2\co \underline{CF}^+(Y_0(K);\Lambda_{\omega_0}) \to \underline{CF}^+(Y;\Lambda_{\omega})$ and\\ $h_3\co \underline{CF}^+(Y_1(K);\Lambda_{\omega_1}) \to \underline{CF}^+(Y_0(K);\Lambda_{\omega_0})$.

We claim that  $h_1$ is a nullhomotopy of $\underline{f}^+_{W_1;PD(\eta \times I)}\circ\underline{f}^+_{W_0;PD(\eta \times I)}$.  To see this, we consider the  moduli spaces of holomorphic rectangles of Maslov index $0$.  This moduli spaces can have 6 kinds of ends: 
\begin{enumerate}
\item  splicing holomorphic discs at one of its 4 corners, and
\item   splicing two holomorphic triangles.  Triangles may be spliced in two ways: one triangle for  $X_{\alpha \beta\gamma}$ and one triangle for $X_{\alpha \gamma \delta}$, or one triangle for $X_{\alpha \beta\delta}$ and one triangle for $X _{\beta \gamma \delta}$
\end{enumerate}
Notice $PD(\eta \times I)$ is $0$ when restricted to the corners $Y_{\beta \gamma}$ and $Y_{\gamma \delta}$: in fact, we can make $\eta\times I$ disjoint from these manifolds since $\eta$ may be pushed completely into the $\alpha$-handlebody, $U_\alpha$, by cellular approximation (see figure \ref{fig:4m}).

\begin{figure}[htbp] \begin{center}
\input{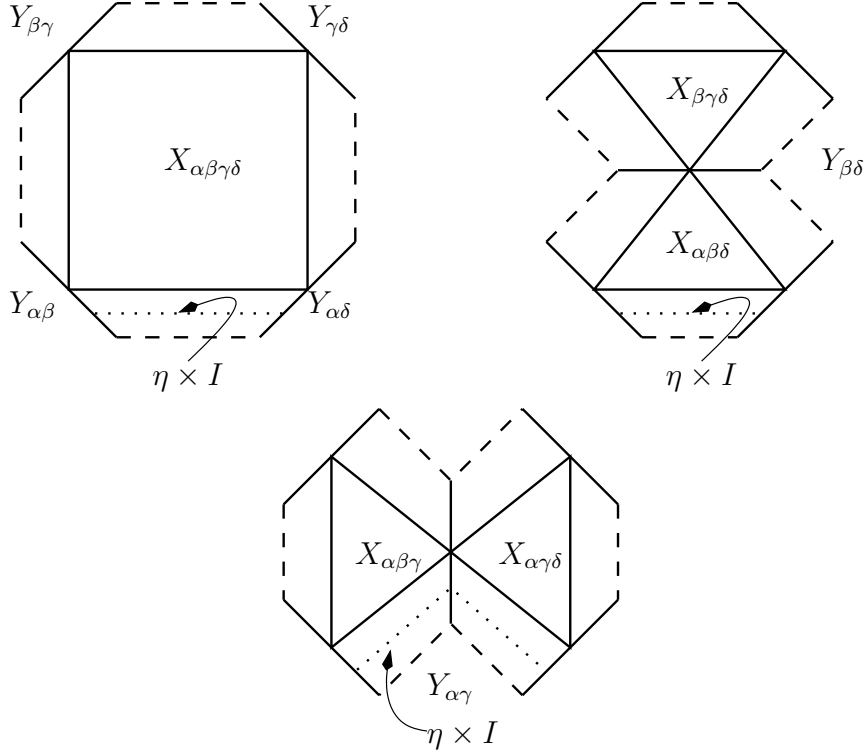}
\vspace*{-2mm} \caption{\label{fig:4m} Schematics of the 4-manifold $X_{\alpha\beta\gamma\delta}$ and its decompositions.  }
\end{center}
\vspace*{-5mm}\end{figure}

 This imples that \[\underline{CF}^+(Y_{\beta\gamma};\Lambda_{PD(\eta \times I)|_{Y_{\beta \gamma}}})\cong CF^+(Y_{\beta\gamma})\otimes_\mathbb{Z}\Lambda\] and all differentials are trivial (informally, we are using an ``untwisted" count). For the end coming from splicing two holomorphic triangles, one for $X_{\alpha \beta\delta}$ and one for $X _{\beta \gamma \delta}$, it is also true that $PD(\eta \times I)$ is 0 when restricted to the 4-manifold $X_{\beta \gamma \delta}$ (again, since $\eta$ may be pushed completely into $U_\alpha$). Therefore we are  counting holomorphic triangles in $X_{\beta \gamma \delta}$ without twisting. In \cite{OSzAnn2} it is shown that the untwisted counting of holomorphic triangles in $X_{\beta \gamma \delta}$ is zero.  This leaves three terms remaining: 
 \begin{enumerate}
\item  splicing a disc at corner $Y_{\alpha \beta}$ counted with twisting by $PD(\eta \times I)|_{Y_{\alpha \beta}}=[\omega]$.  This corresponds to $h_1 \circ \partial$.
\item  splicing a disc at corner $Y_{\alpha \delta}$ counted with twisting by $PD(\eta \times I)|_{Y_{\alpha \delta}}=[\omega_1]$.  This corresponds to $\partial \circ h_1$, and
\item splicing two holomorphic triangles from  $X_{\alpha \beta\gamma}$ and $X_{\alpha \gamma \delta}$ counted with twisting by $PD(\eta \times I)$.  This corresponds to $\underline{f}^+_{W_1;PD(\eta \times I)}\circ\underline{f}^+_{W_0;PD(\eta \times I)}$.
\end{enumerate}
From the fact that the moduli space must have total end zero, it is clear that the sum of the above $3$ terms are zero, i.e  $h_1$ is a homotopy connecting 
$\underline{f}^+_ {W_1;PD(\eta \times I)}\circ\underline{f}^+_{W_0;PD(\eta \times I)}$ to the zero map. This shows that $\underline{F}^+_{W_1;PD(\eta \times I)}\circ\underline{F}^+_{W_0;PD(\eta \times I)}=0$ on the homology level.  The same argument shows that $\underline{F}^+_{W_2;PD(\eta \times I)}\circ\underline{F}^+_{W_1;PD(\eta \times I)}=0$ and $\underline{F}^+_{W_0;PD(\eta \times I)}\circ\underline{F}^+_{W_2;PD(\eta \times I)}=0$ as well.
 
 At last we prove that  Sequence (\ref{Seq:01}) is exact.  Using the homological algebra method in \cite{OSzDouble} we need to show that $h \circ\underline{f}^{+}+\underline{f}^{+} \circ h$ is homotopic to the identity map.  This can be done by counting holomorphic pentagons and noticing that $PD(\eta \times I)$ is zero when restricting to $Y_{\beta \gamma}$,$Y_{ \gamma \delta}$,$Y_{\delta \beta'}$,$X_{\beta \gamma \delta}$, $X_{\gamma \delta \beta'}$ and $X_{\beta \gamma \delta \beta'}$ (this follows since we may assume that $\eta\subset U_\alpha$).  This shows that the counts there are ``untwisted". From this observation 
one can easily see that everything in the proof of exactness in \cite{OSzDouble} can go through to our twisted version.  
\end{proof}

In the above theorem, the cohomology classes $[\omega_r]$ are integral. In  practice one may need to use real cohomology class as well. In that situation, for a given homology class $[\omega] \in H^2(Y;\mathbb{R})$ we can express it as finite sum 
$$[\omega]=\sum a_iPD(\eta_i)$$
where $\eta_i$ are closed curves in the knot complement and $a_i \in \mathbb{R}$. Each $\eta_i$ can be viewed as a closed curve in $Y_r(K)$, so the expression $\sum a_iPD(\eta_i)$ also gives a real cohomology class in $Y_r(K)$, denote by $[\omega_r] \in H^2(Y_r(K);\mathbb{R})$. In the coborsim $W_r$, 
$$\sum a_i PD(\eta_i \times I)$$
is  a real cohomology class in $H^2(W_r;\mathbb R)$, hence  gives rise to homomorphism between $\omega$--twisted Floer homologies. With this understood, it is easy to see that  Theorem \ref{theorem:exact} still holds.

\begin{rem}The exact sequence in Theorem \ref{theorem:exact} depends on $\eta$, not just its Poincar\'e dual $[\omega] \in H^2(Y;\mathbb{Z})$. In fact if we take another closed curve $\eta'=\eta +k \cdot \mu$ (where $\mu$ is a meridian of $K$), this doesn't change $[\omega]$, but may change $[\omega_0], [\omega_1]$ and the  exact sequence.  For example, take $K \subset S^3$ to be the unknot  and $\eta=k \cdot \mu$ in the knot complement, then  $[\omega_0]$ is $k$ times the generator of  $H^2(S^2 \times S^1;\mathbb{Z})$. When $k \neq 0$, the corresponding exact sequence for the hat version is
\begin{displaymath}
\begin{xymatrix}{
\cdots \to \widehat{\underline{HF}}(S^3_0(K);\Lambda_{\omega_0})\ar[r] \ar[d]^\cong & \widehat{\underline{HF}}(S^3_1(K);\Lambda_{\omega_1}) \ar[r]\ar[d]^\cong &\widehat{\underline{HF}}(S^3;\Lambda_{\omega}) \ar[d]^\cong \to \cdots \\
 0\ar[r] & \Lambda \ar[r]^{1-t^k} & \Lambda }
\end{xymatrix}
\end{displaymath}
Clearly it depends on $k$. When $k=0$, the exact sequence is obtained from the corresponding exact sequence for untwisted Heegaard floer homology by tensoring with $\Lambda$.
\end{rem}

In \cite{OSzGenus}, Ozsv\'ath and Szab\'o used another version of twisted Floer homology $\underline{HF}^+(Y;[\omega])$, which is defined by using the $\mathbb{Z}[H^1(Y;\mathbb{Z})]$--module $\mathbb{Z}[\mathbb{R}]$. The $\omega$--twisted Floer  homology we used in this paper can be viewed as a completion of  $\underline{HF}^+(Y;[\omega])$. It is easy to see that there is a similar exact sequence in their context. More precisely, we have the following exact sequence:
 
\begin{equation}
\begin{xymatrix}{ \underline{HF}^+(Y;[\omega]) \ar[rr]
&&\underline{HF}^+(Y_0(K);[\omega_0]) \ar[dl]\\
&\underline{HF}^+(Y_1(K);[\omega_1])\ar[ul]&}
\end{xymatrix}
\end{equation}

With the above exact sequences in place, we can now prove Theorem \ref{theorem:main}.  We merely mimic the proof of Theorem 5.2 in \cite{OSzSympl}.
\begin{proof}[Proof of  Theorem \ref{theorem:main}]
For  a given cohomology class $[\omega]\in H^2(Y;\mathbb{Z})$ with $\omega(F)=d \neq 0$, choose a closed curve  $\eta \subset Y$ such that  its Poincar\'e dual $PD(\eta)$ equals $[\omega]$. Since the mapping class group of a torus  is generated as a monoid by right-handed Dehn twists along non-seperating curves, we can connect $Y$ to the three-manifold $S_0^3(T)$, which is obtained from $S^3$ by performing $0$--surgery on the right-handed trefoil, by a sequence of torus bundles
$$
\pi_i:Y^i \to S^1
$$
and cobordisms
\begin{displaymath}
\xymatrix{	Y=Y^0 \ar[r]^<<<<{W_0} & Y^1 \ar[r]^<<<<{W_1} & \cdots \ar[r]^<<<<{W_{n-1}} & Y^n = S_0^3(T)   }
\end{displaymath}
such that the monodromy of $Y^{i+1}$ differs from that of $Y^i$ by a single right-handed Dehn twist along a nonseparating  knot $K_i$ which lies in a fiber  $F_i$ of $\pi_i$.  The curve $\eta\subset Y$ induces curves $\eta_i\subset Y^i$ which can be assumed disjoint from $K_i$.  In this way, we get a sequence of cohomology classes $\omega_i=PD(\eta_i)\in H^2(Y^i;\mathbb{Z})$ such that $\omega_i(F_i)=d\neq0$.  The cobordism $W_i$ is obtained by attaching a single 2-handle to $Y^i \times I$ along the knot $K_i$  with framing $-1$ (with respect to the framing $K_i$ inherits from the fiber $F_i$).  Since $\eta_i$ is disjoint from $K_i$, $\eta_i\times I$ defines a relative homology class $[\eta_i\times I]\in H_2(W_i,\partial W_i;\mathbb{Z})$ and hence its Poincar\'e dual gives rise to homomorphisms between $\omega$--twisted Floer homologies:
$$\underline{F}^+_{W_i;PD(\eta_i \times I)}: \underline{HF}^+(Y^i;\Lambda_{\omega_i}) \to \underline{HF}^+(Y^{i+1};\Lambda_{\omega_{i+1}}).$$
We claim it is an isomorphism. Notice that $Y^{i+1} = (Y^i)_{-1}(K_i)$ where the 0-framing of $K_i$ is defined to be the framing $K_i$ inherits from the fiber, $F_i$.  Now consider $(Y^i)_0(K_i)$. This manifold contains  a  $2$--sphere $S_i$ (which is obtained from $F_i$ by surgering along $K_i$) and also an induced curve $\eta_i$ such that $\eta_i\cdot S_i=d\neq0$, therefore $\underline{HF}^+((Y^i)_0(K_i);\Lambda_{PD(\eta_i)})=0$ by proposition \ref{proposition:spheres}.  The exact sequence (\ref{Seq:01}) now proves the claim. 

This shows that $$\underline{HF}^+(Y;\Lambda_{\omega}) \cong \underline{HF}^+(S^3_0(T);\Lambda_{PD(\eta)})$$ where $\eta$ is the induced curve in $S_0^3(T)$.  We now identify the latter group.  For simplicity we write $\omega=PD(\eta)$. Identifying $\mathbb{Z}[H^1(S^3_0(T);\mathbb{Z})]$ with $\mathbb{Z}[t,t^{-1}]$,
 Ozsv\'ath and Szab\'o show in \cite{OSzAbsGr} that there is an identification of $\mathbb{Z}[t,t^{-1}]$--modules:
\[
	\underline{HF}^+_k(S_0^3(T);\mathbb{Z}[t,t^{-1}]) \cong \left\{
						\begin{array}{rl}
						\mathbb{Z} & \text{if } k\equiv-1/2 \,(\text{mod  }2)\,\text{and } k\geq-1/2\\
						\mathbb{Z}[t,t^{-1}] & \text{if } k=-3/2\\
						0 & \text{otherwise}
						\end{array}\right.
\]
Where the lefthand group is the universally twisted Heegaard Floer homology of $S^3_0(T)$, the $\mathbb{Z}$'s on the right are trivial $\mathbb{Z}[H^1(S^3_0(T);\mathbb{Z})]$--modules, and $\mathbb{Z}[t,t^{-1}]$ is a module over itself in the natural way.  By definition,
 $$\underline{CF}^+(S_0^3(T);\Lambda_{\omega}) = \underline{CF}^+(S_0^3(T);\mathbb{Z}[t,t^{-1}])\otimes_{\mathbb{Z}[t,t^{-1}]}\Lambda_{\omega}.$$
 We now apply the universal coefficients spectral sequence.  We need only compute $Tor^{\mathbb{Z}[t,t^{-1}]}_q(\mathbb{Z},\Lambda_{\omega})$.  We start with the free $\mathbb{Z}[t,t^{-1}]$-resolution of $\mathbb{Z}$:
\begin{displaymath}
\xymatrix{
	0 \ar[r] & \mathbb{Z}[t,t^{-1}] \ar[r]^{1-t} & \mathbb{Z}[t,t^{-1}]\ar[r] & \mathbb{Z} \ar[r] &0 }.
\end{displaymath}
Tensoring this complex over $\mathbb{Z}[t,t^{-1}]$ with $\Lambda_{\omega}$ and agumenting gives the complex 
\begin{displaymath}
\xymatrix{
	0 \ar[r] & \Lambda \ar[r]^{1-t^d} & \Lambda \ar[r] & 0 }.
\end{displaymath}
 Since we're working over $\Lambda$ and $d\neq0$, the middle map is an isomorphism and we see that $Tor^{\mathbb{Z}[t,t^{-1}]}_q(\mathbb{Z},\Lambda_{\omega})=0$ for all $q$.  Applying the universal coefficients spectral sequence, we obtain an isomorphism of $\Lambda$-modules $\underline{HF}^+(S_0^3(T);\Lambda_{\omega})\cong \Lambda$. Hence we have a $\Lambda$-module isomorphism
\[
	\underline{HF}^+(Y;\Lambda_{\omega})\cong \Lambda.
\]
Clearly $\underline{HF}^+(Y;\Lambda_{\omega})$ is supported in a single Spin$^c$ structure, since $\Lambda$ is a field.  Notice also that this Spin$^c$ structure must be torsion by the conjugation symmetry of $\underline{HF}^\circ$.  
\end{proof}  It is worth noting that alternate proofs of this theorem and proposition \ref{proposition:spheres} are possible through the use of inadmissible diagrams, which have been explored by Zhong Tao Wu.

\end{document}